\newtheorem{proposition}{Proposition}
\newtheorem{remark}{Remark}
\newtheorem{theorem}{Theorem}
\title{\LARGE \bf
Convergence Analysis of the Best Response Algorithm \\ for Time-Varying Games
}
\author{Zifan Wang, Yi Shen, Michael M. Zavlanos, and Karl H. Johansson
\thanks{* This work was supported in part by Swedish Research Council Distinguished Professor Grant 2017-01078, Knut and Alice Wallenberg Foundation, Wallenberg Scholar Grant, the Swedish Strategic Research Foundation CLAS Grant RIT17-0046, AFOSR under award \#FA9550-19-1-0169, and  NSF under award CNS-1932011.
}
\thanks{Zifan Wang and Karl H. Johansson are with Division of Decision and Control Systems, School of Electrical Enginnering and Computer Science, KTH Royal Institute of Technology, and also with Digital Futures, SE-10044 Stockholm, Sweden. Email: \{zifanw,kallej\}@kth.se.}
\thanks{Yi Shen and Michael M. Zavlanos are with the Department of Mechanical Engineering and Materials Science, Duke University, Durham, NC, USA. Email: \{yi.shen478, michael.zavlanos\}@duke.edu}%
}
\begin{document}

\maketitle
\thispagestyle{empty}
\pagestyle{empty}

\begin{abstract}
This paper studies a class of strongly monotone games involving non-cooperative agents that optimize their own time-varying cost functions.
We assume that the agents can observe other agents' historical actions and choose actions that best respond to other agents' previous actions; we call this a best response scheme.
We start by analyzing the convergence rate of this best response scheme for standard time-invariant games.
Specifically, we provide a sufficient condition on the strong monotonicity parameter of the time-invariant games under which the proposed best response algorithm achieves exponential convergence to the static Nash equilibrium. 
We further illustrate that this best response algorithm may oscillate when the proposed sufficient condition fails to hold, which indicates that this condition is tight.
Next, we analyze this best response algorithm for time-varying games where the cost functions of each agent change over time.
Under similar conditions as for time-invariant games, we show that the proposed best response algorithm stays asymptotically close to the evolving equilibrium. We do so by analyzing both the equilibrium tracking  error and the dynamic regret.
Numerical experiments on economic market problems are presented to validate our analysis.

\end{abstract}

\section{Introduction}
Online convex games study the interplay between game theory and online learning, and find many applications ranging from traffic routing \cite{sessa2019no} to economic market optimization \cite{wang2022risk,lin2020finite}. 
In these games, agents simultaneously take actions to minimize their loss functions, which depend on the other agents' actions.

Generally, every agent in an online convex game adapts its actions to the actions of other agents in a dynamic manner with the objective to minimize its regret, defined as the cumulative difference in performance between the agent's online actions and the best single action in hindsight.
An algorithm is said to achieve no-regret learning if every agent's regret generated by this algorithm is sub-linear in the total number of episodes.
If the agents in an online game reach a stationary point from which no agent has an incentive to deviate, then we say the game has reached a Nash equilibrium.
There is a growing literature \cite{tatarenko2018learning,bravo2018bandit,drusvyatskiy2021improved,mertikopoulos2019learning,wang2022zeroth} that analyzes the Nash equilibrium convergence in strongly monotone games which admit a unique Nash equilibrium as shown in~\cite{rosen1965existence}.

In non-cooperative games, a common strategy used by competitive agents that selfishly minimize their own cost functions is the best response algorithm since it produces the most favorable outcome given the other agents plays.
The best response algorithm has been shown to converge under a spectral condition associated with the best-response map \cite{shanbhag2016inexact,facchinei201012}.
In general, best response algorithms have been studied for several classes of games, including supermodular games \cite{milgrom1990rationalizability}, potential games \cite{pass2019course,swenson2018best,lei2017randomized} and zero-sum games \cite{leslie2020best}. 
For example, \cite{swenson2018best} shows that in almost every potential game with finite actions, the best response dynamics converges to the unique Nash equilibrium with linear rate. 
Similarly, \cite{leslie2020best} shows the convergence of several best response dynamics in two-player zero-sum games.

In this paper, we study the regret and equilibrium tracking error of the best response algorithm for time-varying games. Specifically, we consider a class of strongly monotone games \cite{rosen1965existence,bravo2018bandit}, which guarantee the uniqueness of the well-defined Nash equilibrium. To the best of our knowledge, the best response algorithm has not been explored in the literature for time-varying games. 
Instead, time-varying games have been analyzed using  gradient-based algorithms for, e.g., strongly monotone games \cite{duvocelle2022multiagent} and zero-sum games \cite{zhang2022no}. Specifically, \cite{duvocelle2022multiagent} analyzes the Nash equilibrium convergence and the equilibrium tracking properties of the mirror descent algorithm for games that converge and diverge, respectively. In \cite{zhang2022no}, a gradient-type algorithm is proposed that achieves performance guarantees under three different measures.
As gradient-based algorithms are fundamentally different compared to the best response method, the techniques developed in these works cannot be applied here to analyze the best response algorithm.

To address this challenge, we first start with time-invariant games. Specifically, we assume games that satisfy the so-called strong monotonicity condition with parameter $m>0$, which guarantees the uniqueness of the Nash equilibrium \cite{rosen1965existence}. We provide a sufficient condition $m>L\sqrt{N-1}$ under which the best response algorithm achieves linear convergence to the static Nash equilibrium, where $L$ is the Lipschitz constant related to the gradient of the individual loss functions and $N$ is the number of agents. 
Moreover, we show numerically that when this condition fails to hold, the best response algorithm may oscillate. 
Compared to \cite{facchinei201012}, here we characterize the convergence in terms of the strong monotonicity parameter. For simple problems, we can show that our proposed condition is equivalent to the spectral condition proposed in \cite{facchinei201012}. 
Then, we analyze the best response algorithm for time-varying games where the Nash equilibrium evolves over time. Specifically, under similar conditions as for time-invariant games, we show that the average distance from the evolving equilibrium is bounded by the equilibrium variation. 
We also show that the dynamic regret is bounded by the cumulative variations of the loss functions.

The rest of the paper is organized as follows. In Section~\ref{sec:problem}, we provide some preliminaries and formally define the problem. In Section~\ref{sec:BR}, we present the regret and equilibrium convergence of the best response algorithm for time-invariant games. In Section~\ref{sec:TV_BR}, we extend our result to time-varying games and analyze the equilibrium tracking error and the dynamic regret. In Section~\ref{sec:simulation}, numerical experiments on  a Cournot game are presented to verify our method. Finally, in Section~\ref{sec:conclusion}, we conclude the paper.

\section{Preliminaries and Problem Definition}\label{sec:problem}
\subsection{Online Convex Games}
Consider an online convex game $\mathcal{G}$ with $N$ agents, whose goal is to learn their best individual actions that minimize their local loss functions.
For each agent  $i\in \mathcal{N}=\{1,\ldots,N\}$, denote by $\mathcal{C}_i(x_i,x_{-i}) : \mathcal{X} \rightarrow \mathbb{R}$ its individual loss function, where $x_i \in \mathcal{X}_i$ is the action of agent $i$, $x_{-i}$ are the actions of all agents excluding agent $i$, and we define by $\mathcal{X} =\Pi_{i=1}^N\mathcal{X}_i$ the joint action space since each agent takes actions independently. 
For ease of notation, we collect all agents' actions in a vector $x:=(x_1,\ldots,x_N)$. 
We assume that $\mathcal{C}_i(x)$ is convex in $x_i$ for all $x_{-i} \in \mathcal{X}_{-i}$, where $\mathcal{X}_{-i}$ is the joint action space excluding agent $i$.
%
The goal of every agent~$i$ is to determine the action $x_i$ that minimizes its individual cost function, i.e., 
\begin{align}\label{eq:def:game}
    \mathop{{\rm{min}}}_{x_i \in \mathcal{X}_i} \mathcal{C}_{i}(x_i,x_{-i}).
\end{align}
As shown in \cite{rosen1965existence}, convex games always have at least one Nash equilibrium. In what follows, we denote by $x^{*}$ a Nash equilibrium of the game \eqref{eq:def:game}. Then, for each agent $i$, we have $\mathcal{C}_i(x^{*})\leq \mathcal{C}_i(x_i,x_{-i}^{*}),$ $\forall x_i \in \mathcal{X}_i$, $i\in\mathcal{N}$. At this Nash equilibrium point, agents are strategically stable in the sense that each agent lacks incentive to change its action.
Since the agents' loss functions are convex, the Nash equilibrium can also be characterized by the first-order optimality condition, i.e., $\langle \nabla_{x_i} \mathcal{C}_i(x^{*}), x_i - x_i^{*} \rangle \geq 0, \; \forall x_i \in \mathcal{X}_i, i\in\mathcal{N},$ where $\nabla_{x_i} \mathcal{C}_i(x)$ is the partial derivative of the loss function with respect to each agent's action.  We write $\nabla_{i} \mathcal{C}_i(x)$ instead of $\nabla_{x_i} \mathcal{C}_i(x)$ whenever it is clear from the context. 


In general, it is not easy to show convergence to a Nash equilibrium for games with multiple Nash Equilibria. 
For this reason, recent studies often focus on games that are so-called strongly monotone and are well-known to have a unique Nash equilibrium \cite{rosen1965existence}. 
The game \eqref{eq:def:game} is said to be $m$-strongly monotone if for $\forall x,x'\in \mathcal{X}$ we have that 
\begin{align}\label{eq:strong_monotone}
    \sum_{i=1}^N \langle \nabla_i \mathcal{C}_i(x) -\nabla_i \mathcal{C}_i(x'),x_i-x_i' \rangle \geq m \left\|x -x' \right\|^2.
\end{align}
The ability of the agents to efficiently learn their optimal actions can be quantified using the notion of (static) regret that captures the cumulative loss of the learned online actions compared to the best actions in hindsight, and can be formally defined as
\begin{align}\label{eq:def:regret:game}
    {\rm{SR}}_i(T)= \sum_{t=1}^T \mathcal{C}_i(x_t) - \mathop{\rm{min}}_{x_i} \sum_{t=1}^T\mathcal{C}_i(x_i,x_{-i,t}),
\end{align}
for sequences of actions $\{x_{i,t} \}_{t=1}^T, i=1,\ldots,N$.
An algorithm is said to be no-regret if the regret of each agent is sub-linear in the total number of episodes $T$, i.e., ${\rm{SR}}_i(T)=\mathcal{O}(T^a), a\in[0,1)$, $\forall i \in \mathcal{N}$.

\subsection{Problem Definition}
In this work, we consider the time-varying game $\mathcal{G}_t$ where at episode $t$ each agent aims to minimize its time-varying cost function, i.e.,
\begin{align}\label{eq:def:TV:game}
    \mathop{{\rm{min}}}_{x_i \in \mathcal{X}_i} \mathcal{C}_{i,t}(x_i,x_{-i}).
\end{align}
Then, we can define the best response algorithm for time-varying games as
\begin{align}\label{eq:TVBR:update}
    x_{i,t+1} = \mathop{\rm{arg min}}_{x_i \in \mathcal{X}_i} \mathcal{C}_{i,t} (x_i, x_{-i,t}).
\end{align}
To attain the best response action $x_{i,t+1}$, for each agent $i$, we assume the cost function $\mathcal{C}_{i,t}$ is known and all other agents' previous actions are provided. This is not a very strong assumption. For example, in supply chain problems \cite{cachon2006game}, $\mathcal{C}_{i,t}$ can represent an agent's local revenue model that depends on all competitors' actions and unknown market demands. At the beginning of episode $t+1$, the agents may not be able to observe the other agents' actions and precisely predict the market demands. However, previous actions and demands can be obtained from public revenue reports. 
Thus, it is reasonable to implement a strategy where the agents take actions that best respond to the other agents' actions from the previous episode. 
In addition, we assume that at every episode $t$, the time-varying game with the cost function $\mathcal{C}_{i,t}$ is strongly monotone and thus has a unique Nash equilibrium, which we denote by $x_t^{*}$.
To analyze the performance of the best response algorithm \eqref{eq:TVBR:update} for time-varying games, we define the equilibrium tracking error 
\begin{align}\label{eq:BRTV:trackingerror}
    {\rm{Err}}(T):=\sum_{t=1}^T\left\| x_t - x_t^{*}\right\|^2,
\end{align}
and the dynamic regret
\begin{align}\label{eq:BRTV:dynamic:regret}
    {\rm{DR}}_i(T) := \sum_{t=1}^T \Big( \mathcal{C}_{i,t}(x_t) - \mathop{\rm{min}}_{y_i} \mathcal{C}_{i,t}(y_i,x_{-i,t})\Big),
\end{align}
where $T$ is the total number of episodes.
If the game $\mathcal{G}_t$ changes significantly over time, it is reasonable to expect that it may become impossible to track the evolving equilibrium. 
The time-varying  problem becomes meaningful only when the variation of the game $\mathcal{G}_t$ is reasonably small.
To capture the effect of the variation of the game $\mathcal{G}_t$ on the performance of the best response algorithm, we first define the equilibrium variation
\begin{align}\label{eq:def:VT}
    V_T:=\sum_{t=1}^T\left\|x_{t}^{*}- x_{t+1}^{*}  \right\|^2,
\end{align}
which tracks the changes of Nash equilibria.
It is possible that the cost function $ \mathcal{C}_{i,t}$ changes over time but the equilibrium stays constant, i.e., $V_T=0$. 
To further capture the variations of the cost functions, we define the function variation 
\begin{align}\label{eq:def:WT}
    W_{i,T} = \sum_{t=1}^T \sup_{x\in\mathcal{X}}|C_{i,t}(x) - C_{i,t+1}(x)|.
\end{align}
%
Our goal in this paper is to analyze the equilibrium tracking error and the dynamic regret of the best response algorithm \eqref{eq:TVBR:update} for time-varying games. To do so, we start with the analysis of time-invariant games and then extend our results to the time-varying case.

\section{Time-Invariant Games}\label{sec:BR}
In this section, we provide sufficient conditions for Nash equilibrium convergence of the best response algorithm for time-invariant games. The best response algorithm in this case becomes 
\begin{align}\label{eq:BR:update}
    x_{i,t+1} = \mathop{\rm{arg min}}_{x_i \in \mathcal{X}_i} \mathcal{C}_{i} (x_i, x_{-i,t}).
\end{align} 
%
%
%
\begin{proposition}\label{prop:BR}
Suppose that the game $\mathcal{G}$ is $m$-strongly monotone, and $\nabla_i \mathcal{C}_i(x_i,x_{-i})$ is $L$-Lipschitz continuous in $x_{-i}$ 
for every $x_i \in \mathcal{X}_i$, with parameter $m>L \sqrt{N-1}$. 
Then, the best response algorithm \eqref{eq:BR:update} satisfies that
\begin{align}\label{eq:BR:convergence}
    \left\| x_T - x^{*}\right\| \leq \rho^{T-1} \left\| x_1 - x^{*}\right\|,
\end{align}
where $\rho:= \frac{L\sqrt{N-1}}{m}$.
\end{proposition}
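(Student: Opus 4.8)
The plan is to prove that a single step of the best‑response update \eqref{eq:BR:update} contracts the distance to the Nash equilibrium $x^{*}$ by the factor $\rho$, and then to iterate. The first step is to extract from the game‑level $m$‑strong monotonicity \eqref{eq:strong_monotone} a per‑agent strong convexity statement: applying \eqref{eq:strong_monotone} to two action profiles that differ only in the $i$‑th block, $x=(x_i,x_{-i})$ and $x'=(x_i',x_{-i})$, all cross terms vanish and one is left with $\langle \nabla_i\mathcal{C}_i(x_i,x_{-i})-\nabla_i\mathcal{C}_i(x_i',x_{-i}),\,x_i-x_i'\rangle \ge m\|x_i-x_i'\|^2$, so that $\mathcal{C}_i(\cdot,x_{-i})$ is $m$‑strongly convex in its own argument for every fixed $x_{-i}$.

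Next I would write the first‑order optimality conditions for the two minimizations defining $x_{i,t+1}$ in \eqref{eq:BR:update} and the Nash equilibrium $x_i^{*}$: for every $x_i\in\mathcal{X}_i$, $\langle \nabla_i\mathcal{C}_i(x_{i,t+1},x_{-i,t}),\,x_i-x_{i,t+1}\rangle\ge 0$ and $\langle \nabla_i\mathcal{C}_i(x_i^{*},x_{-i}^{*}),\,x_i-x_i^{*}\rangle\ge 0$ (convexity of $\mathcal{X}_i$ makes these variational inequalities valid). Taking $x_i=x_i^{*}$ in the first and $x_i=x_{i,t+1}$ in the second and adding gives $\langle \nabla_i\mathcal{C}_i(x_{i,t+1},x_{-i,t})-\nabla_i\mathcal{C}_i(x_i^{*},x_{-i}^{*}),\,x_{i,t+1}-x_i^{*}\rangle\le 0$. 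I would then insert the intermediate term $\nabla_i\mathcal{C}_i(x_i^{*},x_{-i,t})$ and split: the part with common second argument $x_{-i,t}$ is bounded below by $m\|x_{i,t+1}-x_i^{*}\|^2$ via the per‑agent strong convexity above, while the part with common first argument $x_i^{*}$ is bounded by Cauchy--Schwarz and the $L$‑Lipschitz continuity of $\nabla_i\mathcal{C}_i(x_i,\cdot)$, yielding $m\|x_{i,t+1}-x_i^{*}\|^2\le L\,\|x_{-i,t}-x_{-i}^{*}\|\,\|x_{i,t+1}-x_i^{*}\|$ and hence $m\|x_{i,t+1}-x_i^{*}\|\le L\,\|x_{-i,t}-x_{-i}^{*}\|$ (the case $x_{i,t+1}=x_i^{*}$ being trivial).

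Squaring, summing over $i$, and using the identity $\sum_{i=1}^{N}\|x_{-i,t}-x_{-i}^{*}\|^2=(N-1)\|x_t-x^{*}\|^2$ (each block $x_{j,t}-x_j^{*}$ occurs in exactly $N-1$ of the coordinate‑deleted vectors) gives $m^2\|x_{t+1}-x^{*}\|^2\le L^2(N-1)\|x_t-x^{*}\|^2$, that is $\|x_{t+1}-x^{*}\|\le\rho\,\|x_t-x^{*}\|$ with $\rho=L\sqrt{N-1}/m<1$ exactly under the hypothesis $m>L\sqrt{N-1}$. Iterating from $t=1$ to $t=T-1$ then yields \eqref{eq:BR:convergence}.

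The step I expect to be the crux is the first one: recognizing that the game‑level strong monotonicity hypothesis already supplies per‑agent strong convexity is precisely what lets the insert‑intermediate‑term estimate close without accumulating an extra $\|x_{t+1}-x_t\|$ term — a naive splitting instead produces only the weaker rate $L\sqrt{N-1}/(m-L\sqrt{N-1})$ under the stronger requirement $m>2L\sqrt{N-1}$. Everything after that step is routine bookkeeping with Cauchy--Schwarz and the Lipschitz bound.
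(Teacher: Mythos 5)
Your proof is correct and follows essentially the same route as the paper's: both extract per-agent strong convexity from the game-level monotonicity condition \eqref{eq:strong_monotone} by restricting to profiles differing only in the $i$-th block, insert the intermediate gradient $\nabla_i\mathcal{C}_i(x_i^{*},x_{-i,t})$, and close the estimate with the $L$-Lipschitz bound and the identity $\sum_i\|x_{-i,t}-x_{-i}^{*}\|^2=(N-1)\|x_t-x^{*}\|^2$. The only (immaterial) difference is that you derive a per-coordinate contraction $m\|x_{i,t+1}-x_i^{*}\|\le L\|x_{-i,t}-x_{-i}^{*}\|$ and then sum squares, whereas the paper sums the inequalities first and applies Cauchy--Schwarz to the aggregate; both give the same $\rho$.
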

\begin{proof}
Applying the first order optimality condition to the cost function $\mathcal{C}_i$  at the optimal point $x_{i,t+1}$ and using the update rule \eqref{eq:BR:update}, we have that
\begin{align}\label{eq:BR_temp1}
    \langle \nabla_i \mathcal{C}_i (x_{i,t+1},x_{-i,t}),x_i - x_{i,t+1} \rangle \geq 0, \; \; \forall x_i \in \mathcal{X}_i.
\end{align}
Since the game is strongly monotone, we have that for all $x_i \in \mathcal{X}_i$,
\begin{align}\label{eq:BR_temp2}
    \langle \nabla_i \mathcal{C}_i (x_{i},x_{-i,t})- \nabla_i \mathcal{C}_i (x_{i,t+1},x_{-i,t}), x_i - x_{i,t+1} \rangle  \nonumber \\
    \geq m \left\| x_i -x_{i,t+1} \right\|^2,
\end{align}
which follows from the definition \eqref{eq:strong_monotone} by setting $x=(x_{i},x_{-i,t})$ and $x'=(x_{i,t+1},x_{-i,t})$.
Combining \eqref{eq:BR_temp2} with \eqref{eq:BR_temp1} and replacing $x_i$ with $x_i^{*}$, we get
\begin{align}\label{eq:BR_temp3}
    &m \left\| x_i^{*} -x_{i,t+1} \right\|^2  
    \leq  \langle \nabla_i \mathcal{C}_i (x_{i}^{*},x_{-i,t}) , x_i^{*} - x_{i,t+1} \rangle .
\end{align}
Summing the both sides of inequality \eqref{eq:BR_temp3} over $i=1,\ldots,N$, we have that
\begin{align}\label{eq:BR_temp4}
    & \left\| x_{t+1} - x^{*} \right\|^2 
    \leq  \frac{1}{m} \sum_i \langle \nabla_i \mathcal{C}_i (x_{i}^{*},x_{-i,t}) , x_i^{*} - x_{i,t+1} \rangle \nonumber \\
    & \leq  \frac{1}{m} \sum_i \langle \nabla_i \mathcal{C}_i (x_{i}^{*},x_{-i,t}) - \nabla_i \mathcal{C}_i (x^{*}) , x_i^{*} - x_{i,t+1} \rangle \nonumber \\
    & \leq   \frac{1}{m} \sum_i L \left\| x_{-i,t} - x_{-i}^{*}\right\| \left\| x_{i}^{*} - x_{i,t+1}\right\| \nonumber \\
   & \leq   \frac{L\sqrt{N-1}}{m}   \left\| x_{t} - x^{*}\right\| \left\| x^{*} - x_{t+1}\right\|,
\end{align}
where the second inequality follows from the Nash equilibrium condition $\langle \nabla_i \mathcal{C}_i (x^{*}),x_i - x_{i}^{*} \rangle \geq 0 $, $\forall x_i \in \mathcal{X}_i$ and the third inequality is due to the Lipschitz continuous property of the function $\mathcal{C}_i$ in $x_{-i}$.
The last inequality follows from the Cauchy-Schwarz inequality.
Dividing the inequality \eqref{eq:BR_temp4} by $\left\| x_{t+1} - x^{*} \right\|$ yields 
\begin{align}\label{eq:BR_temp5}
    \left\| x_{t+1} - x^{*} \right\| \leq \frac{L \sqrt{N-1}}{m}\left\| x_{t} - x^{*} \right\|.
\end{align}
Note, if $\left\| x_{t+1} - x^{*} \right\| = 0$, then \eqref{eq:BR_temp5} holds trivially. Applying inequality \eqref{eq:BR_temp5} iteratively over $t=1,\ldots,T-1$ completes the proof.
\end{proof}
In what follows, we provide some intuition and explain the condition $m>L \sqrt{N-1}$.
First, suppose that $L_1$ is the Lipschitz constant of the function $ \nabla_i \mathcal{C}_i(x)$ with respect to $x$. From its definitions we conclude that $L\leq L_1$. Therefore, the Lipschitz constant $L_1$ provides an upper bound on the variation of the gradients and is always greater than the strongly monotone parameter $m$ which provides a lower bound, i.e., $m\leq L_1$. However, it is still possible to have $m>L \sqrt{N-1} $. For example, if $\mathcal{C}_i$ only depends on $x_i$, we have that $L = 0$ and thus the condition naturally holds as long as $m>0$.

On the other hand, consider the condition $m>L\sqrt{N-1}$ and rearrange the terms to get $L<\frac{m}{ \sqrt{N-1}}$. Recall that $L$ is the Lipschitz constant of the function $\nabla_i \mathcal{C}_i(x_i,x_{-i})$ with respect to $x_{-i}$, which can be interpreted as the maximum influence of the other agents' actions on agent $i$. The condition $L<\frac{m}{ \sqrt{N-1}}$ requires that this influence is small enough for the game to converge.  The presence of multiple agents ($N$ is large) reduces the upper bound on the influence of other agents' actions which , effectively, increases the difficulty of the game.

Note that \cite{facchinei201012} also provides a sufficient condition for convergence of the best response algorithm, that involves the spectral norm of a matrix composed of parameters related to the second-order partial derivative of the cost function.
In this work, we analyze the best response algorithm from a different perspective that relies on strong monotonicity to characterize  convergence.
In simple cases such as two-player potential games, it is easy to show that our condition is equivalent to the condition in \cite{facchinei201012}.
However, in general, strong monotonicity  provides a more intuitive condition for convergence. Finally, we experimentally show that when the condition $m>L\sqrt{N-1}$ does not hold, the best-response algorithm may lead to cycles. This result further validates the utility of the proposed condition. 

Proposition \ref{prop:BR} shows that the best response algorithm converges to the Nash equilibrium at an exponential rate. Indeed, it is a no-regret learning algorithm for each agent as well, as shown in the following proposition.

\begin{proposition}\label{prop:BR:no_regret}
Suppose that the game $\mathcal{G}$ is $m$-strongly monotone with parameter $m>L \sqrt{N-1}$, the cost $C_i(x_i,x_{-i})$ is $L_0$-Lipschitz continuous in $x_{-i}$ for every $x_i \in \mathcal{X}_i$, and the diameter of the convex set $\mathcal{X}_i$ is bounded by $D$, for all $i=1,\ldots,N$ Then, the static regret of the best response algorithm satisfies
\begin{align*}
    {\rm{SR}}_i(T) \leq  \sum_{t=1}^T \mathcal{C}_i(x_t) -  \sum_{t=1}^T \min_{x_i}\mathcal{C}_i(x_i,x_{-i,t}) = \mathcal{O}(1).
\end{align*}
\end{proposition}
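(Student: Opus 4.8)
The plan is to reduce the static regret to a sum of per-episode instantaneous regrets and then show that this sum is a convergent geometric-type series using the linear convergence from Proposition~\ref{prop:BR}. \emph{Step 1 (reduction to per-episode terms).} Plugging any single fixed action into each summand cannot beat the per-episode minima, so $\min_{x_i}\sum_{t=1}^T\mathcal{C}_i(x_i,x_{-i,t})\ge\sum_{t=1}^T\min_{x_i}\mathcal{C}_i(x_i,x_{-i,t})$; this is exactly the first inequality in the statement, and it remains to bound $\sum_{t=1}^T\big(\mathcal{C}_i(x_t)-\min_{x_i}\mathcal{C}_i(x_i,x_{-i,t})\big)$. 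By the best response rule \eqref{eq:BR:update}, $\min_{x_i}\mathcal{C}_i(x_i,x_{-i,t})=\mathcal{C}_i(x_{i,t+1},x_{-i,t})$, so every summand equals the nonnegative quantity $\mathcal{C}_i(x_{i,t},x_{-i,t})-\mathcal{C}_i(x_{i,t+1},x_{-i,t})$.

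\emph{Step 2 (bounding one instantaneous regret).} For $t\ge2$ I would insert the intermediate competitor profile $x_{-i,t-1}$ and write the gap as a telescoping sum of three differences: $\mathcal{C}_i(x_{i,t},x_{-i,t})-\mathcal{C}_i(x_{i,t},x_{-i,t-1})$, then $\mathcal{C}_i(x_{i,t},x_{-i,t-1})-\mathcal{C}_i(x_{i,t+1},x_{-i,t-1})$, then $\mathcal{C}_i(x_{i,t+1},x_{-i,t-1})-\mathcal{C}_i(x_{i,t+1},x_{-i,t})$. The first and third are each at most $L_0\|x_{-i,t}-x_{-i,t-1}\|$ by $L_0$-Lipschitz continuity of $\mathcal{C}_i$ in $x_{-i}$; the middle one is nonpositive because $x_{i,t}$ is the best response to $x_{-i,t-1}$. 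Hence the instantaneous regret at episode $t$ is at most $2L_0\|x_{-i,t}-x_{-i,t-1}\|\le2L_0\|x_t-x_{t-1}\|$.

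\emph{Step 3 (summation).} By the triangle inequality and Proposition~\ref{prop:BR}, $\|x_t-x_{t-1}\|\le\|x_t-x^{*}\|+\|x_{t-1}-x^{*}\|\le(\rho^{t-1}+\rho^{t-2})\|x_1-x^{*}\|$, and the diameter assumption gives $\|x_1-x^{*}\|^2=\sum_{i}\|x_{i,1}-x_i^{*}\|^2\le ND^2$. Since $\rho=L\sqrt{N-1}/m<1$ under the hypothesis, summing the resulting geometric series over $t=2,\dots,T$ gives a bound of order $L_0\sqrt{N}D(1+\rho)/(1-\rho)$, independent of $T$. The remaining $t=1$ term $\mathcal{C}_i(x_{i,1},x_{-i,1})-\mathcal{C}_i(x_{i,2},x_{-i,1})$ is a single finite constant (a fixed difference of continuous functions on the compact set $\mathcal{X}$, or bounded by $\langle\nabla_i\mathcal{C}_i(x_{i,1},x_{-i,1}),x_{i,1}-x_{i,2}\rangle$ via convexity), so the total is $\mathcal{O}(1)$.

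\emph{Main obstacle.} The delicate point is Step 2: the hypotheses only provide Lipschitz continuity of $\mathcal{C}_i$ in the competitors' block $x_{-i}$, whereas the quantity to control, $\mathcal{C}_i(x_{i,t},x_{-i,t})-\mathcal{C}_i(x_{i,t+1},x_{-i,t})$, varies agent $i$'s own action at a fixed $x_{-i,t}$. Routing through $x_{-i,t-1}$, at which $x_{i,t}$ is optimal, is precisely what converts this own-action gap into competitor-action increments that the Lipschitz constant $L_0$ can absorb, at the price of treating $t=1$ as a separate additive constant. An alternative that avoids the special case bounds the gap by $\langle\nabla_i\mathcal{C}_i(x_{i,t},x_{-i,t})-\nabla_i\mathcal{C}_i(x_{i,t+1},x_{-i,t}),x_{i,t}-x_{i,t+1}\rangle$ using convexity in $x_i$ together with the first-order optimality at $x_{i,t+1}$, and then invokes Lipschitz continuity of $\nabla_i\mathcal{C}_i$ in $x_i$ and the linear decay of $\|x_t-x_{t+1}\|$ from Proposition~\ref{prop:BR}; this is cleaner but needs an extra smoothness assumption not explicitly listed.
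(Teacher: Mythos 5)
Your proof is correct, and it shares the paper's overall skeleton (reduce the static regret to per-episode gaps via $\sum_t \min_{x_i}\mathcal{C}_i(x_i,x_{-i,t})\le \min_{x_i}\sum_t\mathcal{C}_i(x_i,x_{-i,t})$, bound each gap by $O(L_0\|x_t-x_{t\pm1}\|)$, and sum a geometric series using Proposition~\ref{prop:BR}), but the key middle step is handled by a genuinely different decomposition. The paper writes the per-episode gap as $\big(\mathcal{C}_i(x_t)-\mathcal{C}_i(x_{t+1})\big)+\big(\mathcal{C}_i(x_{t+1})-\mathcal{C}_i(x_{i,t+1},x_{-i,t})\big)$: the first piece telescopes over $t$ to the single constant $\mathcal{C}_i(x_1)$, and the second piece varies only the competitors' block (from $x_{-i,t+1}$ to $x_{-i,t}$ at fixed own action $x_{i,t+1}$), so one application of the $L_0$-Lipschitz property gives $L_0\|x_{t+1}-x_t\|$. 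You instead route each gap through the previous competitor profile $x_{-i,t-1}$, using optimality of $x_{i,t}$ there to kill the own-action variation and paying two Lipschitz increments, for a bound of $2L_0\|x_t-x_{t-1}\|$. Your route buys a purely term-by-term (non-telescoping) estimate and avoids the paper's implicit assumption that $\mathcal{C}_i(x_{T+1})\ge 0$ when it drops the telescoped endpoint; the price is a factor of $2$, a separate treatment of $t=1$, and the need to justify that the $t=1$ gap is finite, which you correctly do via convexity in $x_i$ (or boundedness of $\mathcal{X}$), and which is no stronger than the paper's own reliance on $\mathcal{C}_i(x_1)$ being a finite constant. The final summation in both arguments is the same geometric series with ratio $\rho=L\sqrt{N-1}/m<1$, with $\|x_1-x^{*}\|\le\sqrt{N}D$ handling the diameter (the paper is in fact slightly looser here, writing $\rho^t D$); either way the total is $\mathcal{O}(1)$.
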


\begin{proof}
The first inequality holds due to the fact that $\sum_{t=1}^T \min_{x_i}\mathcal{C}_i(x_i,x_{-i,t})\leq \min_{x_i}\sum_{t=1}^T \mathcal{C}_i(x_i,x_{-i,t})$.
Observe that $\mathcal{C}_i(x_{i,t+1},x_{-i,t})=\min_{x_i}\mathcal{C}_i(x_i,x_{-i,t})$ since $x_{i,t+1}=\mathop{\rm{arg min}}_{x_i \in \mathcal{X}_i} \mathcal{C}_{i} (x_i, x_{-i,t})$. Then, it follows that
\begin{align}\label{eq:BR_no_temp1}
    &{\rm{SR}}_i(T) \leq \sum_{t=1}^T \mathcal{C}_i(x_t) -  \sum_{t=1}^T \min_{x_i}\mathcal{C}_i(x_i,x_{-i,t}) \nonumber\\
    & = \sum_{t=1}^T \Big( \mathcal{C}_i(x_t)- \mathcal{C}_i(x_{t+1})+ \mathcal{C}_i(x_{t+1}) - \mathcal{C}_i(x_{i,t+1},x_{-i,t}) \Big) \nonumber \\
    & \leq \mathcal{C}_i(x_1) + \sum_{t=1}^T \Big(\mathcal{C}_i(x_{t+1}) - \mathcal{C}_i(x_{i,t+1},x_{-i,t}) \Big) \nonumber \\
    & \leq  \mathcal{C}_i(x_1) + L_0 \sum_{t=1}^T \left\|x_{-i,t+1} - x_{-i,t}\right\| \nonumber \\
    & \leq  \mathcal{C}_i(x_1) + L_0 \sum_{t=1}^T \left\|x_{t+1} - x_{t}\right\|,
\end{align}
where the second to the last inequality follows from the Lipschitz continuous property of the function $\mathcal{C}_i$ in $x$.
By virtue of \eqref{eq:BR:convergence} in Proposition 1, we have
\begin{align}\label{eq:BR_no_temp2}
    &\left\|x_{t+1} - x_{t}\right\|^2 = \left\|x_{t+1} -x^{*}+x^{*} -x_{t}\right\|^2 \nonumber \\
    &\leq  2 \left\|x_{t+1} -x^{*}\right\|^2 + 2 \left\|x^{*} -x_{t}\right\|^2 
    \leq  2(\rho^2+1)\left\|x_{t}-x^{*} \right\|^2.
\end{align}
Substituting the inequality \eqref{eq:BR_no_temp2} into \eqref{eq:BR_no_temp1}, we have 
\begin{align}\label{eq:BR_no_temp3}
    {\rm{SR}}_i(T) 
    \leq & \mathcal{C}_i(x_1) + L_0 \sum_{t=1}^T \sqrt{2(\rho^2+1)} \left\|x_{t} - x^{*}\right\|  \nonumber \\
    \leq &  \mathcal{C}_i(x_1) + L_0 \sqrt{2(\rho^2+1)} \sum_{t=1}^T \rho^t D \nonumber \\
    \leq & \mathcal{C}_i(x_1)+\frac{ DL_0 \sqrt{2(\rho^2+1)}} {1-\rho},
\end{align}
which completes the proof.
\end{proof}

Proposition \ref{prop:BR:no_regret} indeed provides a stronger bound than the static regret defined in \eqref{eq:def:regret:game}. Instead of comparing to a single best action in hindsight, it compares with a sequence of episode-wise best actions, which is equivalent to the dynamic regret with time-invariant cost functions. This strong result own itself to the best response algorithm.

\section{Time-varying games}\label{sec:TV_BR}
In this section, we analyze time-varying games $\mathcal{G}_t$ where the cost functions of the agents change over time. Since the equilibrium of these games also varies, in what follows we analyze the ability of the best response algorithm \eqref{eq:TVBR:update} to generate actions that track the evolving equilibrium. 

If the game $\mathcal{G}_t$ changes significantly, it is reasonable to expect that it will be hard to track the evolving equilibrium. Therefore, as in related literature \cite{duvocelle2022multiagent,zhang2022no}, we assume that both the equilibrium variation $V_T$ in \eqref{eq:def:VT} and the function variation $W_{i,T}$ in \eqref{eq:def:WT} are sub-linear in $T$, for $i=1,\ldots,N$.

In what follows, we analyze the equilibrium tracking error of the best response algorithm \eqref{eq:TVBR:update} in terms of the equilibrium variation.
\begin{theorem}\label{theorem:BRTV}
Suppose that the time-varying game $\mathcal{G}_t$ is $m_t$-strongly monotone and $\nabla_i \mathcal{C}_{i,t}(x_i,x_{-i})$ is $L_t$-Lipschitz continuous in $x_{-i}$ for every $x_i \in \mathcal{X}_i$ with parameter $m_t>L_t \sqrt{N-1}$, for $\forall t$. Then, the best response algorithm \eqref{eq:TVBR:update} satisfies that
\begin{align}\label{eq:BRTV:convergence}
    {\rm{Err}}(T) \leq \frac{\left\| x_{1}- x_{1}^{*}\right\|^2}{1-\rho_m} + \frac{V_T}{(1-\rho_m)^2}=\mathcal{O}\left( 1+ V_T\right),
\end{align}
where $\rho_m:= \mathop{\rm{max}}_t \left\{ \frac{L_t\sqrt{N-1}}{m_t} \right\}$.
\end{theorem}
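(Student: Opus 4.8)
The plan is to adapt the one-step contraction argument from Proposition~\ref{prop:BR} to the time-varying setting, where the target equilibrium $x_t^{*}$ now changes from step to step. First I would repeat the derivation of \eqref{eq:BR_temp5} verbatim with the time-indexed data: applying the first-order optimality condition for $x_{i,t+1}=\arg\min_{x_i}\mathcal{C}_{i,t}(x_i,x_{-i,t})$, then $m_t$-strong monotonicity of $\mathcal{G}_t$, the Nash condition for $x_t^{*}$, $L_t$-Lipschitz continuity of $\nabla_i\mathcal{C}_{i,t}$ in $x_{-i}$, and Cauchy--Schwarz, exactly as in \eqref{eq:BR_temp4}. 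This yields $\left\| x_{t+1}-x_t^{*}\right\| \le \rho_t \left\| x_t - x_t^{*}\right\|$ with $\rho_t := L_t\sqrt{N-1}/m_t \le \rho_m < 1$, hence
\begin{align}\label{eq:BRTV_step}
    \left\| x_{t+1}-x_t^{*}\right\| \le \rho_m \left\| x_t - x_t^{*}\right\|.
\end{align}

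Next I would convert this into a recursion for the quantity we actually want to sum, namely $d_t := \left\| x_t - x_t^{*}\right\|$. The point is that \eqref{eq:BRTV_step} bounds $\left\| x_{t+1}-x_t^{*}\right\|$, not $d_{t+1}=\left\| x_{t+1}-x_{t+1}^{*}\right\|$, so I insert the equilibrium drift via the triangle inequality: $d_{t+1} \le \left\| x_{t+1}-x_t^{*}\right\| + \left\| x_t^{*}-x_{t+1}^{*}\right\| \le \rho_m d_t + v_t$, where $v_t := \left\| x_t^{*}-x_{t+1}^{*}\right\|$ so that $\sum_t v_t^2 = V_T$. Unrolling this linear recursion gives $d_{t} \le \rho_m^{t-1} d_1 + \sum_{s=1}^{t-1}\rho_m^{t-1-s} v_s$.

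To obtain the bound on ${\rm{Err}}(T)=\sum_{t=1}^T d_t^2$, I would square the unrolled bound and sum over $t$. The cross terms and the geometric-weighted sum of $v_s$ are handled cleanly by Young's inequality or, more slickly, by the discrete convolution / Cauchy--Schwarz estimate $\big(\sum_{s}\rho_m^{t-1-s}v_s\big)^2 \le \big(\sum_s \rho_m^{t-1-s}\big)\big(\sum_s \rho_m^{t-1-s} v_s^2\big) \le \frac{1}{1-\rho_m}\sum_s \rho_m^{t-1-s}v_s^2$, and then swapping the order of summation over $t$ and $s$ to pull out another factor $\frac{1}{1-\rho_m}$; similarly $\sum_t \rho_m^{2(t-1)} d_1^2 \le \frac{d_1^2}{1-\rho_m^2}\le \frac{d_1^2}{1-\rho_m}$. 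Collecting terms produces precisely ${\rm{Err}}(T)\le \frac{d_1^2}{1-\rho_m} + \frac{V_T}{(1-\rho_m)^2}$.

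The only genuinely delicate point is the bookkeeping in this last step: one must keep the $(1-\rho_m)$ versus $(1-\rho_m)^2$ powers straight and make sure the double sum $\sum_{t=1}^T\sum_{s=1}^{t-1}$ is correctly bounded by $\sum_{s=1}^{T}v_s^2 \sum_{t>s}\rho_m^{\,\cdot\,}$; everything else is a mechanical repetition of Proposition~\ref{prop:BR}. I would also note in passing that each $\rho_t<1$ follows from the standing assumption $m_t>L_t\sqrt{N-1}$, so $\rho_m<1$ and all the geometric series converge.
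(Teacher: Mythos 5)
Your proposal is correct and rests on the same two pillars as the paper's proof: the one-step contraction $\left\| x_{t+1}-x_t^{*}\right\|\le\rho_t\left\| x_t-x_t^{*}\right\|$ obtained by rerunning the argument of Proposition~\ref{prop:BR} with the time-indexed data (this is exactly \eqref{eq:BRTV_temp4}), and the insertion of the equilibrium drift $v_t=\left\|x_t^{*}-x_{t+1}^{*}\right\|$. Where you diverge is in how the recursion is summed. The paper never unrolls: it squares the drift decomposition immediately via $\|a+b\|^2\le(1+\lambda)\|a\|^2+(1+\lambda^{-1})\|b\|^2$ with the specific choice $\lambda=1/\rho_t-1$, which yields the one-step squared recursion $d_{t+1}^2\le\rho_m d_t^2+\tfrac{1}{1-\rho_m}v_t^2$, and then telescopes $\sum_t(d_t^2-d_{t+1}^2)\le d_1^2$ to read off the bound directly. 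Your route --- unroll the linear recursion $d_{t+1}\le\rho_m d_t+v_t$ at the level of distances, then square, apply the convolution Cauchy--Schwarz estimate, and swap the order of summation --- is equally valid and yields the same $\mathcal{O}(1+V_T)$ conclusion. One small correction: it does not produce \emph{precisely} the stated constants. If you fold $d_1$ into the convolution (set $v_0:=d_1$) you obtain ${\rm{Err}}(T)\le (d_1^2+V_T)/(1-\rho_m)^2$, and if you instead split off the $d_1$ term and handle the cross term by Young's inequality, one of the two coefficients is necessarily inflated by a factor depending on the Young parameter; either way you cannot simultaneously attain both $\tfrac{1}{1-\rho_m}$ on $d_1^2$ and $\tfrac{1}{(1-\rho_m)^2}$ on $V_T$ by this route. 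Squaring \emph{before} unrolling, as the paper does, is what recovers the exact constants; for the theorem as stated the distinction is immaterial since the asymptotic bound $\mathcal{O}(1+V_T)$ is unaffected.
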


\begin{proof}
Applying the same arguments as in Proposition \ref{prop:BR} to the cost function $\mathcal{C}_{i,t}$, we can obtain an inequality similar to \eqref{eq:BR_temp5} as
\begin{align}\label{eq:BRTV_temp4}
    \left\| x_{t+1}-x_{t}^{*}  \right\| 
    \leq  \rho_t  \left\| x_{t} - x_{t}^{*}\right\|,
\end{align}
where $\rho_t:=  \frac{L_t\sqrt{N-1}}{m_t} $.
Observe that 
\begin{align*}
    &\left\| x_{t+1}-x_{t+1}^{*}  \right\|^2 = \left\| x_{t+1}- x_{t}^{*} + x_{t}^{*}- x_{t+1}^{*}  \right\|^2 \nonumber \\
    \leq &(1+\lambda) \left\| x_{t+1}- x_{t}^{*}\right\|^2 + (1+\frac{1}{\lambda}) \left\|x_{t}^{*}- x_{t+1}^{*}  \right\|^2,
\end{align*}
for $\forall \lambda >0$. Setting $\lambda = \frac{1}{\rho_t}-1>0$ yields 
\begin{align}\label{eq:BRTV_temp5}
    &\left\| x_{t+1}-x_{t+1}^{*}  \right\|^2 
    \leq \frac{1}{\rho_t } \left\| x_{t+1}- x_{t}^{*}\right\|^2 + \frac{1}{1-\rho_t } \left\|x_{t}^{*}- x_{t+1}^{*}  \right\|^2 \nonumber \\
    & \leq  \rho_t \left\| x_{t}- x_{t}^{*}\right\|^2 + \frac{1}{1-\rho_t } \left\|x_{t}^{*}- x_{t+1}^{*}  \right\|^2 \nonumber \\
    & \leq  \rho_m \left\| x_{t}- x_{t}^{*}\right\|^2 + \frac{1}{1-\rho_m } \left\|x_{t}^{*}- x_{t+1}^{*}  \right\|^2,
\end{align}
where the second inequality follows from \eqref{eq:BRTV_temp4} and the last inequality is due to the fact that $\rho_t \leq \rho_m <1$. Rearranging and summing \eqref{eq:BRTV_temp5} over $t=1,\ldots,T$, we have that
\begin{align*}
    &(1-\rho_m) \sum_{t=1}^T \left\| x_{t}- x_{t}^{*}\right\|^2 \nonumber \\
    \leq & \sum_{t=1}^T \left(\left\| x_{t}- x_{t}^{*}\right\|^2 - \left\| x_{t+1}-x_{t+1}^{*}  \right\|^2 + \frac{\left\|x_{t}^{*}- x_{t+1}^{*}  \right\|^2}{1-\rho_m } \right) \nonumber \\
    \leq & \left\| x_{1}- x_{1}^{*}\right\|^2 +\frac{1}{1-\rho_m } \sum_{t=1}^T\left\|x_{t}^{*}- x_{t+1}^{*}  \right\|^2 \nonumber \\
    \leq & \left\| x_{1}- x_{1}^{*}\right\|^2 +  \frac{1}{1-\rho_m } V_T.
\end{align*}
Dividing both sides of the above inequality by $(1-\rho_m)$ completes the proof.
\end{proof}

Theorem~\ref{theorem:BRTV} shows that $V_T$ dominates the equilibrium tracking error. If $V_T$ is sub-linear in $T$, so is the equilibrium tracking error.
In what follows, we analyze the dynamic regret of each agent in terms of the equilibrium variation and the function variation.
\begin{theorem}\label{theorem:BRTV:no_regret}
Suppose that the time-varying game $\mathcal{G}_t$ is $m_t$-strongly monotone, $\nabla_i \mathcal{C}_{i,t}(x_i,x_{-i})$ is $L_t$-Lipschitz continuous in $x_{-i}$ for every $x_i \in \mathcal{X}_i$ with parameter $m_t>L_t \sqrt{N-1}$, and the cost $\mathcal{C}_{i,t}(x)$ is $L_0$-Lipschitz continuous in $x_{-i}$ for every $x_i \in \mathcal{X}_i$ for $\forall t$. Then, the dynamic regret of the best response algorithm \eqref{eq:TVBR:update} satisfies
\begin{align}
    {\rm{DR}}_i(T)  = \mathcal{O}\left( W_{i,t} +\sqrt{TV_T} \right), \; i=1,\ldots,N.
\end{align}
\end{theorem}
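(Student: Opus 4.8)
The plan is to adapt the telescoping argument used in the proof of Proposition~\ref{prop:BR:no_regret}, but with an extra term that accounts for the drift of the cost functions between consecutive episodes. Since $x_{i,t+1}=\mathop{\rm{arg min}}_{x_i\in\mathcal{X}_i}\mathcal{C}_{i,t}(x_i,x_{-i,t})$ by the update rule \eqref{eq:TVBR:update}, the inner minimum in \eqref{eq:BRTV:dynamic:regret} is attained at $x_{i,t+1}$, so that ${\rm{DR}}_i(T)=\sum_{t=1}^T\big(\mathcal{C}_{i,t}(x_t)-\mathcal{C}_{i,t}(x_{i,t+1},x_{-i,t})\big)$. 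I would then split each summand, by inserting $\pm\,\mathcal{C}_{i,t+1}(x_{t+1})$ and $\pm\,\mathcal{C}_{i,t}(x_{t+1})$, into the three pieces (i) $\mathcal{C}_{i,t}(x_t)-\mathcal{C}_{i,t+1}(x_{t+1})$, (ii) $\mathcal{C}_{i,t+1}(x_{t+1})-\mathcal{C}_{i,t}(x_{t+1})$, and (iii) $\mathcal{C}_{i,t}(x_{t+1})-\mathcal{C}_{i,t}(x_{i,t+1},x_{-i,t})$.

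The three resulting sums are treated independently. The sum of (i) telescopes to $\mathcal{C}_{i,1}(x_1)-\mathcal{C}_{i,T+1}(x_{T+1})=\mathcal{O}(1)$, using that the costs are bounded (which follows, as in Proposition~\ref{prop:BR:no_regret}, from the Lipschitz hypotheses together with boundedness of the action sets $\mathcal{X}_i$). The sum of (ii) is at most $\sum_{t=1}^T\sup_{x\in\mathcal{X}}|\mathcal{C}_{i,t}(x)-\mathcal{C}_{i,t+1}(x)|=W_{i,T}$ by the definition \eqref{eq:def:WT}. Finally, in (iii) the two arguments $x_{t+1}=(x_{i,t+1},x_{-i,t+1})$ and $(x_{i,t+1},x_{-i,t})$ agree in the $i$-th block and differ only in the $-i$ block, so $L_0$-Lipschitz continuity of $\mathcal{C}_{i,t}$ in $x_{-i}$ gives $|\mathcal{C}_{i,t}(x_{t+1})-\mathcal{C}_{i,t}(x_{i,t+1},x_{-i,t})|\le L_0\|x_{-i,t+1}-x_{-i,t}\|\le L_0\|x_{t+1}-x_t\|$.

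It then remains to bound $\sum_{t=1}^T\|x_{t+1}-x_t\|$. By the one-step contraction \eqref{eq:BRTV_temp4} and the triangle inequality, $\|x_{t+1}-x_t\|\le\|x_{t+1}-x_t^*\|+\|x_t^*-x_t\|\le(1+\rho_m)\|x_t-x_t^*\|$, so it suffices to control $\sum_{t=1}^T\|x_t-x_t^*\|$. Here I would take square roots in the one-step error recursion derived in the proof of Theorem~\ref{theorem:BRTV} and use $\sqrt{a+b}\le\sqrt a+\sqrt b$ to obtain $\|x_{t+1}-x_{t+1}^*\|\le\sqrt{\rho_m}\,\|x_t-x_t^*\|+\frac{1}{\sqrt{1-\rho_m}}\,\|x_t^*-x_{t+1}^*\|$; unrolling this geometric recursion and summing over $t$ yields $\sum_{t=1}^T\|x_t-x_t^*\|\le\frac{1}{1-\sqrt{\rho_m}}\Big(\|x_1-x_1^*\|+\frac{1}{\sqrt{1-\rho_m}}\sum_{t=1}^T\|x_t^*-x_{t+1}^*\|\Big)$, and one Cauchy--Schwarz step bounds $\sum_{t=1}^T\|x_t^*-x_{t+1}^*\|\le\sqrt{T\,V_T}$ by the definition \eqref{eq:def:VT}. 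Putting the pieces together gives $\sum_{t=1}^T\|x_{t+1}-x_t\|=\mathcal{O}(1+\sqrt{T V_T})$ and hence ${\rm{DR}}_i(T)=\mathcal{O}(1)+W_{i,T}+L_0\cdot\mathcal{O}(1+\sqrt{T V_T})=\mathcal{O}(W_{i,T}+\sqrt{T V_T})$.

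The step I expect to be the main obstacle is the last one, turning the pointwise action-drift bound $L_0\|x_{t+1}-x_t\|$ into a $\sqrt{T V_T}$ aggregate: this is where the strong-monotonicity contraction, the unrolling of the error recursion, and the Cauchy--Schwarz inequality all have to be combined, and it is what yields the $\sqrt{T V_T}$ rate rather than something linear in $V_T$. (Applying Cauchy--Schwarz directly to $\sum_t\|x_t-x_t^*\|^2$ and then invoking Theorem~\ref{theorem:BRTV} also works but is looser, producing an extra $\mathcal{O}(\sqrt T)$ term.) A smaller technical point is that one must justify boundedness of the costs so that the telescoped remainder is $\mathcal{O}(1)$, which relies on the action sets $\mathcal{X}_i$ being bounded, exactly as in Proposition~\ref{prop:BR:no_regret}.
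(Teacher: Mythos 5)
Your proof is correct, and its skeleton is identical to the paper's: the same insertion of $\pm\,\mathcal{C}_{i,t+1}(x_{t+1})$ and $\pm\,\mathcal{C}_{i,t}(x_{t+1})$, the same telescoping of piece (i), the same identification of piece (ii) with $W_{i,T}$ via \eqref{eq:def:WT}, and the same $L_0\|x_{t+1}-x_t\|$ bound on piece (iii). The only place you diverge is the final aggregation of $\sum_{t}\|x_{t+1}-x_t\|$. The paper takes exactly the route you mention parenthetically: it applies Cauchy--Schwarz to get $\sum_t\|x_{t+1}-x_t\|\le\sqrt{T}\bigl(\sum_t\|x_{t+1}-x_t\|^2\bigr)^{1/2}$, bounds $\sum_t\|x_{t+1}-x_t\|^2\le(1+\rho_m)^2\sum_t\|x_t-x_t^*\|^2$, and invokes Theorem~\ref{theorem:BRTV}. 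As you correctly observe, this carries a hidden $\mathcal{O}(\sqrt{T})$ term (from the $\mathcal{O}(1)$ part of ${\rm{Err}}(T)$) that the paper's stated $\mathcal{O}(W_{i,T}+\sqrt{TV_T})$ silently absorbs; your alternative---unrolling the first-order recursion $\|x_{t+1}-x_{t+1}^*\|\le\sqrt{\rho_m}\|x_t-x_t^*\|+(1-\rho_m)^{-1/2}\|x_t^*-x_{t+1}^*\|$ and then applying Cauchy--Schwarz only to the equilibrium path length $\sum_t\|x_t^*-x_{t+1}^*\|\le\sqrt{TV_T}$---avoids that term and is marginally tighter. Both yield the claimed rate, so this is a refinement rather than a necessity; your remark about needing bounded costs (or bounded $\mathcal{X}_i$) for the telescoped remainder is a legitimate small gap in the paper's own write-up as well, since the theorem statement does not explicitly assume a bounded diameter as Proposition~\ref{prop:BR:no_regret} does.
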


\begin{proof}
Using the update rule of the best response algorithm \eqref{eq:TVBR:update}, we have 
\begin{align}
    &{\rm{DR}}_i(T) = \sum_{t=1}^T \Big( \mathcal{C}_{i,t}(x_t) -\mathcal{C}_{i,t}(x_{i,t+1},x_{-i,t})\Big) \nonumber \\
    =& \sum_{t=1}^T \Big( \mathcal{C}_{i,t}(x_t) - \mathcal{C}_{i,t+1}(x_{t+1}) + \mathcal{C}_{i,t+1}(x_{t+1}) \nonumber \\ 
    &- \mathcal{C}_{i,t}(x_{t+1}) + \mathcal{C}_{i,t}(x_{t+1}) - \mathcal{C}_{i,t}(x_{i,t+1},x_{-i,t}) \Big) \nonumber \\ 
    \leq & \mathcal{C}_{i,1}(x_1) +  W_{i,T} + \sum_{t=1}^T \Big(\mathcal{C}_{i,t}(x_{t+1}) - \mathcal{C}_{i,t}(x_{i,t+1},x_{-i,t}) \Big) \nonumber \\ 
    \leq & \mathcal{C}_{i,1}(x_1)+ W_{i,T} + L_0 \sum_{t=1}^T\left\| x_{-i,t+1}-x_{-i,t}\right\| \nonumber \\ 
    \leq & \mathcal{C}_{i,1}(x_1)+ W_{i,T} + L_0 \sum_{t=1}^T \left\| x_{t+1}-x_{t}\right\|.
\end{align}
Using the inequality \eqref{eq:BRTV_temp4} and the fact that $\rho_t \leq \rho_m <1$, we have
\begin{align*}
    &\sum_{t=1}^T\left\| x_{t+1}-x_{t}\right\|^2 =\sum_{t=1}^T\left\| x_{t+1}- x_t^{*} +x_t^{*} - x_{t}\right\|^2 \nonumber \\
    & \leq \sum_{t=1}^T \big( (1+\frac{1}{\rho_m}) \left\| x_{t+1}- x_t^{*}\right\|^2 + (1+\rho_m)  \left\|  x_t^{*} - x_{t}\right\|^2 \big) \nonumber \\
    & \leq (\rho_m+1)^2 \sum_{t=1}^T \left\|  x_{t} - x_t^{*} \right\|^2,
\end{align*}
which further yields
\begin{align}
    &{\rm{DR}}_i(T)  \nonumber \\
    \leq & \mathcal{C}_{i,1}(x_1)+ W_{i,T} + L_0 \sqrt{T} \sqrt{\sum_{t=1}^T \left\| x_{t+1}-x_{t}\right\|^2} \nonumber \\
    \leq &  \mathcal{C}_{i,1}(x_1)+ W_{i,T} + L_0 \sqrt{T} \sqrt{(\rho_m+1)^2 \sum_{t=1}^T \left\|  x_{t} - x_t^{*} \right\|^2} \nonumber \\
    =& \mathcal{O}\left( W_{i,t} +\sqrt{TV_T} \right),
\end{align}
where in the last inequality we use the results from Theorem~\ref{theorem:BRTV}.
The proof is complete.
\end{proof}

Theorem~\ref{theorem:BRTV:no_regret} shows that the dynamic regret is sublinear in $T$ if the variation of the game satisfies $W_{i,T}=\mathcal{O}(T^a)$ and $V_{T}=\mathcal{O}(T^b)$ with $a,b\in[0,1)$. 
\begin{remark}
(Connection between dynamic regret and equilibrium tracking error). In the single agent case, equilibrium tracking error is equivalent to the dynamic regret. However, this is not true for games involving multiple agents. This is due to the fact that the function $\mathcal{C}_{i,t}(\cdot,x_{-i,t})$ is time-varying due to changes in the function $\mathcal{C}_{i,t}$ itself and changes in other agents' actions $x_{-i,t}$.
To see this, consider the class of time-varying games with time-varying cost functions but constant equilibrium, i.e., $V_T=0$, $W_{i,T}=\mathcal{O}(T^a)$ for some $a>0$. In this case, we have $ {\rm{Err}}(T) = \mathcal{O}(1)$ but  ${\rm{DR}}_i(T) = \mathcal{O}(T^a)$.
\end{remark}

\section{Numerical Experiments}\label{sec:simulation}
In this section, we validate our analysis on a Cournot game for both time-invariant and time-varying losses.
\subsection{Time-invariant game}
We first focus on the time-invariant case.
We consider a Cournot game with two agents whose goal is to minimize their local losses by appropriately setting the production quantity $x_i$, $i=1,2$.
The loss function of each agent is given by $\mathcal{C}_i(x) = x_i(\frac{a_i x_i}{2} + b_i x_{-i} - e_i)+ 1$, where $a_i>0$ , $b_i$, $e_i$ are constant parameters, and $x_{-i}$ denotes the production quantity of the opponent of agent~$i$. 
It is easy to show that $\nabla_i \mathcal{C}_i(x) = a_i x_i + b_i x_{-i} -e_i$. Recalling that $L$ is the Lipschitz constant of the function $\nabla_i \mathcal{C}_i(x)$ with respect to $x_{-i}$, we have $L={\rm{max}}\{ |b_1|,|b_2|\}$.
Define $g(x) = (\nabla_1 \mathcal{C}_1(x), \nabla_2 \mathcal{C}_2(x))$ and let $G(x)$ denote the Jacobian of $g(x)$, i.e., $G(x)=[a_1,b_1;b_2,a_2]$. 
According to \cite{rosen1965existence}, the strong monotonicity parameter $m$  coincides with the smallest eigenvalue of the matrix $\frac{G(x)+G'(x)}{2}$. 

We validate our methods for three different selections of parameters $\theta^k:=(a_1^k,a_2^k,b_1^k,b_2^k,e_1^k,e_2^k)$ for $k=1,2,3$. Specifically, We select $\theta^1 = (1,1,0.6,-0.5,1.2,0.8)$, $\theta^2 = (1,1,1,-1,1.2,0.8)$ and $\theta^3 = (1,1,2,-1,1.2,0.8)$. It is easy to verify that $\theta^1$, $\theta^2$ and $\theta^3$ correspond to the cases $m>L\sqrt{N-1}$, $m=L\sqrt{N-1}$, and $m<L\sqrt{N-1}$, respectively. The convergence results are shown in Figure~\ref{fig_TI}. We observe that when $m>L\sqrt{N-1}$, the best response converges with exponential rate. When $m\leq L\sqrt{N-1}$, the best response algorithm fails to converge, which indicates the tightness of our theoretical results.

\begin{figure}[t] 
\begin{center}
\centerline{\includegraphics[width=0.9\columnwidth]{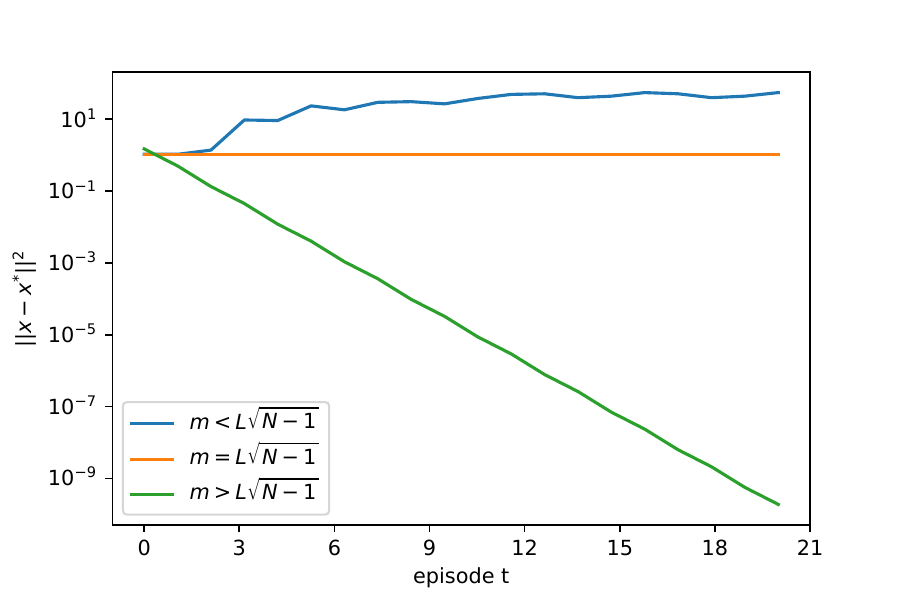}}
\caption{Convergence of the best response algorithm for time-invariant games.}
\label{fig_TI}
\end{center}
\vskip -0.2in
\end{figure}

\subsection{Time-varying games}
For the time-varying case, the loss function of agent $i$ is defined as $\mathcal{C}_{i,t}(x) = x_i(\frac{a_i x_i}{2} + b_{i,t} x_{-i} - e_{i,t})+ 1$, where $a_i=2$, $i=1,2$, and $b_{i,t}$, $e_{i,t}$ are time-varying parameters. The time-varying parameters are selected as 
\begin{align*}
    b_{i,t}=\left\{ \begin{array}{cc} 0.3+ 0.1\times (-1)^t  & t\in[1,T^{0.6}] \\
    0.3 & t\in(T^{0.6},T] \end{array},\right.
\end{align*}
\begin{align*}
    e_{i,t}=\left\{ \begin{array}{cc} 0.4 & t\in[1,T^{0.6}] \\
    0.4+ 0.1\times (-1)^t t^{-1/4}  & t\in(T^{0.6},T] \end{array}.\right.
\end{align*}
We select $T=1000$ and thus $T^{0.6}\approx63$.
It can be verified that the selection of parameters yields $m_t\geq L_t \sqrt{N-1}$ for $\forall t$, and $V_T = \mathcal{O}(T^{3/4})$, $W_{i,T} = \mathcal{O}(T^{3/4})$, $i=1,2$. Figures \ref{fig_tracking}--\ref{fig_DR}  illustrate the equilibrium tracking  error and the dynamic regret of the best response algorithm, respectively. We observe that, when $t\in[1,T^{0.6}]$, both the equilibrium tracking error and the dynamic regret grow rapidly due to the oscillations of $b_{i,t}$; when $t\in(T^{0.6},T]$, they grow slowly since $b_{i,t}$ is a constant and the variation of $e_{i,t}$ is decreasing over time.
Moreover, both the equilibrium tracking error and the dynamic regret are sub-linear in the total number of episodes, which supports our theoretical results.

\begin{figure}[t]
\begin{center}
\centerline{\includegraphics[width=0.9\columnwidth]{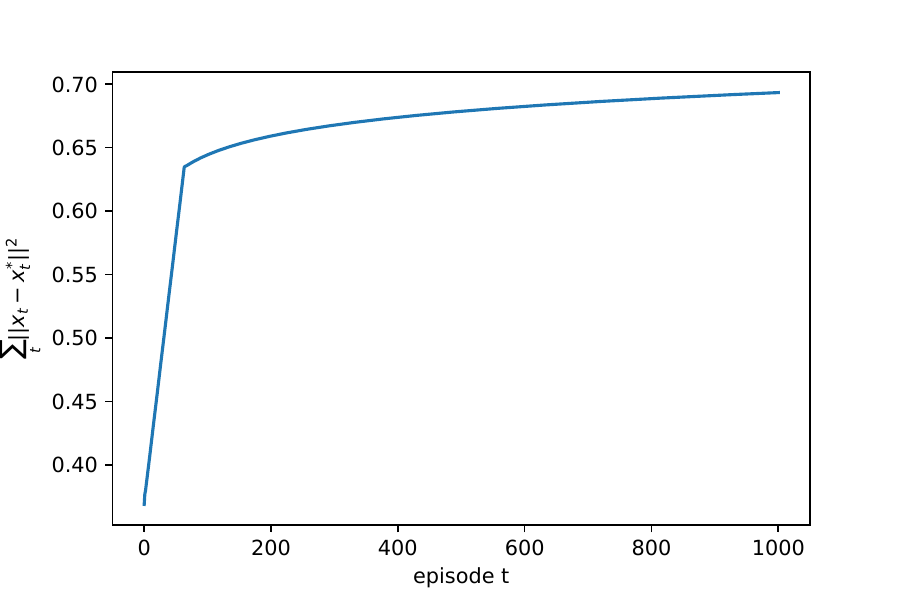}}
\caption{Equilibrium tracking error of the best response algorithm for time-varying games. }
\label{fig_tracking}
\end{center}
\vskip -0.3in
\end{figure}

\begin{figure}[t]
\begin{center}
\centerline{\includegraphics[width=0.9\columnwidth]{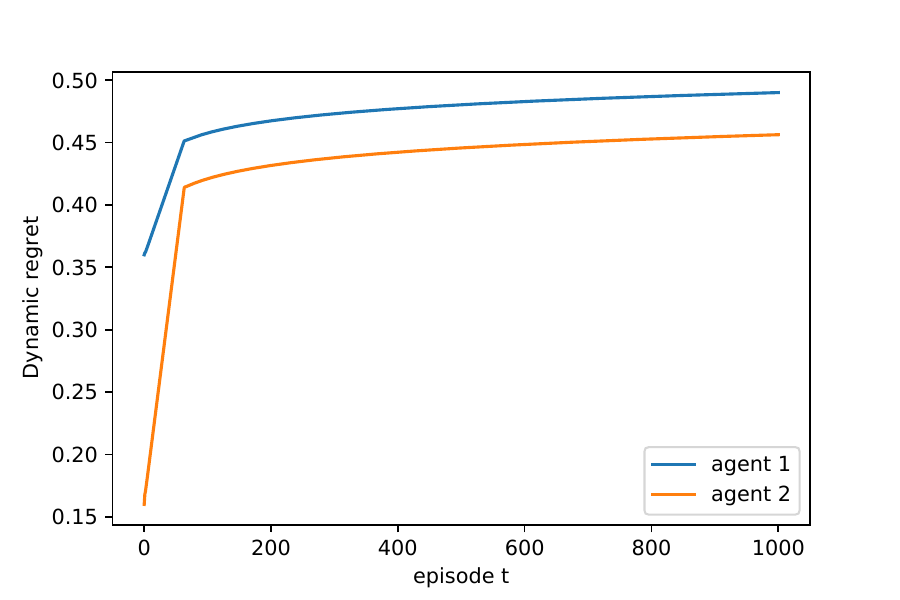}}
\caption{Dynamic regret of the best response algorithm for time-varying games.}
\label{fig_DR}
\end{center}
\vskip -0.3in
\end{figure}

\section{Conclusion}\label{sec:conclusion}
In this work, we analyzed the best response algorithm for the class of strongly monotone games. We first considered standard time-invariant games and obtained a sufficient condition under which the best response algorithm converges at an exponential rate.
We provided numerical experiments that showed the best response algorithm can diverge if this condition fails to hold, which indicates that the condition is tight. Subsequently, we analyzed the best response algorithm for time-varying games with evolving equilibria. We showed that the equilibrium tracking error and the dynamic regret can be bounded in terms of the variations of evolving equilibria and loss functions. 
Moreover, we provided additional numerical simulations to verify our results.

\bibliography{ref}
\bibliographystyle{IEEEtran}

\end{document}